\documentclass[11pt,a4paper,twoside,leqno]{amsart}

\usepackage{amsmath,amssymb,amsthm,mathtools}
\usepackage{eucal}
\usepackage{mathrsfs}
\usepackage[english]{babel}
\usepackage{fullpage}
\usepackage[dvipsnames]{xcolor}
\definecolor{britishracinggreen}{rgb}{0.0,0.26,0.15}
\definecolor{cobalt}{rgb}{0.0,0.28,0.67}
\usepackage[
  colorlinks=true,
  bookmarks=true,
  citecolor=britishracinggreen,
  linkcolor=cobalt,
  filecolor=black,
  urlcolor=black
]{hyperref}

\numberwithin{equation}{section}
\newcommand{\A}{\mathbb{A}}
\newcommand{\C}{\mathbb{C}}

\newcommand{\Z}{\mathbb{Z}}

\newcommand{\SH}{\mathrm{SH}}
\newcommand{\SHext}{\mathrm{SH}_{\mathrm{ext}}}
\newcommand{\Ho}{\mathrm{Ho}}
\newcommand{\CAlg}{\operatorname{CAlg}}
\newcommand{\K}{\mathrm{K}}
\newcommand{\KH}{\mathrm{KH}}
\newcommand{\KGL}{\mathrm{KGL}}

\newcommand{\op}{\mathrm{op}}
\newcommand{\one}{\mathbf{1}}
\newcommand{\tensor}{\otimes}

\newcommand{\mapSH}[3]{\operatorname{map}_{\SH(#1)}(#2,#3)}
\newcommand{\colim}{\operatorname*{colim}}
\newcommand{\Spec}{\operatorname{Spec}}
\newcommand{\Th}{\operatorname{Th}}

\newcommand{\Hom}{\operatorname{Hom}}
\newcommand{\uHom}{\underline{\Hom}}

\theoremstyle{plain}
\newtheorem{thm}{Theorem}[section]
\newtheorem{alphthm}{Theorem}

\newtheorem{prop}[thm]{Proposition}
\newtheorem{lem}[thm]{Lemma}
\newtheorem{cor}[thm]{Corollary}
\newtheorem*{lem*}{Lemma}

\theoremstyle{definition}
\newtheorem{defn}[thm]{Definition}
\newtheorem{setup}[thm]{Setup}

\theoremstyle{remark}

\newtheorem*{rem*}{Remark}

\title[Stable motivic invariants]{Motivic invariants of birational hyper-K\"ahler manifolds}
\author[W.~Wu]{Weimufei Wu}
\address{Northwestern University}
\email{weimufeiwu2030@u.northwestern.edu}
\subjclass[2020]{14F42, 14C35, 14J42, 19E08}
\keywords{stable motivic homotopy, motivic nearby cycles, algebraic spaces, hyper-K\"ahler varieties, algebraic K-theory}
\date{}

\begin{document}
\vspace{-3em}

\begin{abstract}
We prove that birational projective irreducible holomorphic symplectic manifolds have equivalent integral cohomological motives as commutative algebra objects in $\SH(\C)$. Consequently, their algebraic K-theory spectra are equivalent as $E_\infty$-algebras, and their integral graded K-theory rings are isomorphic.
\end{abstract}

\maketitle

\section{Introduction}\label{sec:introduction}

\subsection*{Main result}
A folklore theorem asserts that birational hyper-Kähler manifolds have isomorphic motivic invariants, compatibly with their ring structures. In this note, we establish the following formulation of this statement.

\begin{alphthm}\label{thm:HK-intro}
Let $X$ and $X'$ be birational projective irreducible holomorphic symplectic manifolds, and let $p$ and $p'$ be their structure maps. Then there is an equivalence
\[
a\colon p'_*\one_{X'}\xrightarrow{\simeq}p_*\one_X
\]
in $\CAlg(\SH(\C))$. In particular, $a$ induces an equivalence
\[
\K(X')\xrightarrow{\simeq}\K(X)
\]
of the algebraic K-theory spectra as $E_\infty$-algebras and an isomorphism of integral graded $\K_*$-rings in all degrees.
\end{alphthm}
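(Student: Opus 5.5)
The strategy follows Huybrechts' deformation argument: birational hyper-K\"ahlers become isomorphic after a degeneration. By Huybrechts' theorem there are smooth proper families $\mathcal X\to S$ and $\mathcal X'\to S$ over a smooth pointed curve $(S,0)$, with central fibres $X$ and $X'$. They are isomorphic over $S\setminus\{0\}$, and the graph of this isomorphism has closure $\Gamma\subset\mathcal X\times_S\mathcal X'$, whose special fibre is $Z+\sum_i Y_i$ with $Z$ the graph of the birational map. One caveat is that the total spaces are a priori only algebraic spaces (or analytic), not schemes. I would therefore first algebraize and work with algebraic spaces, using the extension of $\SH$ to algebraic spaces (the $\SHext$ macro suggests exactly this). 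I would also replace $S$ by the henselization or strict henselization at $0$, or a trait $\Spec R$ after spreading out over a finitely generated field and embedding into $\C$.

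The key tool is a motivic nearby cycles functor $\Psi$ from $\SH$ of the generic fibre to $\SH$ of the special fibre, together with its lax monoidal structure. For a smooth proper $f\colon\mathcal X\to S$ with special fibre $p\colon X\to\Spec\C$, I would establish a specialization equivalence
\[
p_*\one_X\simeq \Psi\bigl(f_{\eta*}\one_{\mathcal X_\eta}\bigr)
\]
in $\CAlg(\SH(\C))$. The argument is proper base change together with smoothness, which makes $\Psi$ of the unit the unit on the special fibre; unipotent or tame nearby cycles with $\Psi\one\simeq\one$ would suffice. Applying $\Psi$ to the generic-fibre isomorphism $\mathcal X_\eta\cong\mathcal X'_\eta$, which is an equivalence of commutative algebras $f'_{\eta*}\one\simeq f_{\eta*}\one$, then produces the required equivalence $a$ in $\CAlg(\SH(\C))$. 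Multiplicativity is automatic because everything comes from functors on commutative algebra objects. The alternative route of realizing $a$ by the correspondence $[\Gamma_0]$ would only give a ring map on cohomology, not an $E_\infty$ statement, so I avoid it.

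The main obstacle is making $\Psi$ work in the needed generality. First, the families exist only as algebraic spaces or after passing to the henselian base. Second, one needs $\Psi$ to be symmetric monoidal enough, and needs the specialization map $p_*\one_X\to\Psi(f_{\eta*}\one)$ to be an equivalence for smooth proper algebraic spaces. I would prove the latter by reducing, via Nisnevich (or \'etale-local) descent and the fact that $\SH$ of algebraic spaces satisfies continuity and localization, to the case of schemes over a henselian DVR. There, smooth proper base change gives $i^*j_*j^*f_*\one\simeq i^*f_*\one$ up to the Tate twist correction, and the unipotent part is handled using $\A^1$-invariance.

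Finally, the K-theory consequences follow by mapping into $\KGL$. We have $\K(X)\simeq\mapSH{\C}{\one}{\KGL\tensor p_*\one_X}$, since $X$ is smooth and $\KGL$ represents K-theory, and $\KGL\tensor p_*\one_X\simeq p_*p^*\KGL$ because $p$ is smooth proper and hence dualizable. This construction is lax monoidal, so $a$ induces an $E_\infty$-equivalence $\K(X')\simeq\K(X)$. Taking homotopy groups gives the isomorphism of graded $\K_*$-rings.
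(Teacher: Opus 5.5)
Your architecture matches the paper's: view $f_*\one$ as a commutative algebra in $\SH$ of the base, apply the lax monoidal unipotent nearby-cycle functor from the punctured curve, use a specialization equivalence, transport the generic isomorphism, and finish with $\KGL$. The gap is in the step that carries all the weight, namely the specialization equivalence $i^*f_*\one\simeq\Psi(j^*f_*\one)$ when the total space is only an algebraic space. Your argument is to commute $\Psi$ past $f_*$ by proper base change and then use $\Psi(\one)\simeq\one$ by smoothness. That needs a nearby-cycle functor on $\SHext$ of the total space, together with its compatibilities with proper pushforward and smooth pullback, and this is exactly what is not available. Your proposed repair, Nisnevich or \'etale descent to ``schemes over a henselian DVR'', does not go through. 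The members of a cover of $\mathcal X$ by schemes are smooth but not proper over the base, so descent only writes $f_*\one$ as a totalization of pushforwards along non-proper maps. The functor $i^*$ does not commute with that totalization, and the smooth-proper-base-change computation you want to run on schemes is unavailable for the pieces. Descent could only be used \emph{after} constructing $\Psi$ on the algebraic space and proving $\Psi f_*\simeq f_*\Psi$ there, which is the missing step.

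The idea that avoids this is never to leave the base. For smooth proper $q\colon Z\to C$, Chowdhury's six functors give $q_!\simeq q_*$, $q^!\one_C\simeq\Th(T_q)$ and the projection formula. Hence $\uHom_C(q_*\one_Z,E)\simeq q_*\Th(T_q)\tensor E$, so $A=q_*\one_Z$ is strongly dualizable in $\SH(C)$, and $C$ is a scheme. For dualizable objects of $\SH(C)$ the specialization map $i^*A\to\Upsilon_tj^*A$ is an equivalence. This is the Jin--Yang input: dualizable objects are universally strongly locally acyclic over $\A^1_k$ through an \'etale coordinate $t$. It can also be seen directly: the projection formula for dualizable $A$, applied term by term in the cosimplicial formula for $\Upsilon_t$, gives $\Upsilon_t(j^*A)\simeq i^*A\tensor\Upsilon_t(\one_U)$, which reduces the claim to the unit on the smooth base. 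So nearby cycles are only ever applied on $U$.

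Two smaller points. First, Huybrechts's theorem produces analytic families over a disc; the algebraic input is Rie{\ss}'s pair of algebraic spaces over a pointed curve. Smoothness of both families, and properness of the second, still have to be verified after shrinking the curve; the paper does this with Nagata compactification and a connectedness criterion. Second, henselizing or spreading out is unnecessary: what $\Upsilon_t$ requires is a Zariski shrinking of $C$ with an \'etale $t\colon C\to\A^1_k$.
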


For birational projective irreducible holomorphic symplectic manifolds, \cite[Theorem~3.2]{Rie14} gives an integral graded Chow-ring isomorphism, and \cite[Corollary~7.3]{Rie14} gives an isomorphism of rational Grothendieck $\K_0$-rings. Her argument also gives an isomorphism of rational Chow motives as algebra objects; see \cite[Theorem~4.16(iv)]{FV20}. Theorem~\ref{thm:HK-intro} provides an integral comparison at the level of the stable motivic homotopy $\infty$-category as commutative algebra objects, and hence recovers multiplicative structures on represented cohomology theories.

For varieties of K3$^{[n]}$-type, the D-equivalence theorem of \cite[Theorem~1.2]{MSYZ25} also yields an equivalence of algebraic K-theory spectra, by Morita invariance \cite[Theorems~1.3 and~1.10]{BGT13}. The distinction here is multiplicative: a Fourier--Mukai equivalence need not preserve tensor products and therefore does not, by itself, yield the graded-ring comparison in Theorem~\ref{thm:HK-intro}.

\subsection*{The geometric input}

The geometric input in \cite{Rie14} consists of two algebraic spaces over one smooth pointed curve, with isomorphic restrictions to the punctured curve. Our proof starts with the same geometric input, and we name it as an \textit{H-relation} as follows; see Definition~\ref{defn:H-relation}.

An H-relation consists of two smooth proper families over the same smooth pointed curve, identifications of the special fibers with $X$ and $X'$, and an isomorphism over the punctured curve; see Definition~\ref{defn:H-relation}. Section~\ref{sec:riess} verifies that Rie{\ss}'s deformation families supply such H-data after the curve is shrunk around the marked point.

Our approach starts with an H-relation, and combines the theory of motivic six functors for algebraic spaces and motivic nearby cycle functors for schemes, to get the following.

\begin{alphthm}[Specialization over a curve]\label{thm:H-invariance-intro}
Let $k$ be a field. Let $X$ and $X'$ be smooth proper $k$-schemes that are related by an H-relation in the sense of Definition~\ref{defn:H-relation}, and let $p$ and $p'$ be their structure maps. Then there is an equivalence
\[
a_{\gamma}\colon p'_*\one_{X'}\xrightarrow{\simeq}p_*\one_X
\]
in $\CAlg(\SH(k))$. By duality, there is an equivalence
\[
h_{\gamma}\colon \Sigma_T^\infty X_+ \xrightarrow{\simeq} \Sigma_T^\infty X'_+
\]
of counital cocommutative coalgebra objects in $\SH(k)$, with the structures given by the geometric diagonal and augmentation.

\end{alphthm}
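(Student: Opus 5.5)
The plan is to build $a_\gamma$ by comparing both special fibers with the common generic family through motivic nearby cycles. Fix the H-relation: a smooth pointed curve $(C,c)$ over $k$ with $c$ a $k$-rational point, smooth proper algebraic spaces $f\colon \mathcal{X}\to C$ and $f'\colon\mathcal{X}'\to C$, identifications $\mathcal{X}_c\cong X$ and $\mathcal{X}'_c\cong X'$, and an isomorphism $\phi\colon \mathcal{X}_U\cong\mathcal{X}'_U$ over $U=C\setminus\{c\}$. Write $i\colon c\to C$ and $j\colon U\to C$. Since $f$ and $f'$ are proper, $f_*$ and $f'_*$ exist on $\SH$ of algebraic spaces and are lax monoidal, so $f_*\one_{\mathcal{X}}$ and $f'_*\one_{\mathcal{X}'}$ lie in $\CAlg(\SH(C))$. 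Both are dualizable, being pushforwards along smooth proper maps, and base change gives $i^*f_*\one\simeq p_*\one_X$ and $i^*f'_*\one\simeq p'_*\one_{X'}$ in $\CAlg(\SH(k))$. Moreover $\phi$ induces an equivalence $j^*f'_*\one\simeq j^*f_*\one$ in $\CAlg(\SH(U))$.

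The key step is to recover $i^*E$ from $j^*E$, multiplicatively, for $E=f_*\one_{\mathcal{X}}$ and $E=f'_*\one_{\mathcal{X}'}$. I would shrink $C$ around $c$ so that it carries an étale coordinate $C\to\A^1$ with $c\mapsto 0$. Then I would use the motivic nearby cycle functor $\Psi\colon\SH(U)\to\SH(k)$ (Ayoub's tame or unipotent version, pulled back along the coordinate), which is lax monoidal. There is a natural comparison map $\sigma_E\colon i^*E\to\Psi(j^*E)$, which comes from the unit $E\to j_*j^*E$ together with the specialization structure, and $\sigma$ is compatible with the lax monoidal structures. The claim is that $\sigma_E$ is an equivalence whenever $E=g_*\one$ for a smooth proper $g$. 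This is the constructible analogue of the statement that nearby cycles of a smooth proper family are trivial. I would prove it in two steps:
\begin{enumerate}
\item Use proper base change $\Psi\circ g_{U*}\simeq g_{c*}\circ\Psi$, which Ayoub proves for schemes. It extends to algebraic spaces because Ayoub's construction is étale local, or alternatively by Nisnevich/étale descent along an atlas.
\item Use that $\Psi(\one)\simeq\one$ on the special fiber of a smooth morphism.
\end{enumerate}
Since $\Psi$ and $\sigma$ are lax monoidal and the equivalences are checked on underlying objects, $\sigma$ lifts to an equivalence in $\CAlg$.

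Composing then gives
\[
a_\gamma\colon p'_*\one_{X'}\simeq i^*f'_*\one\xrightarrow{\sigma}\Psi(j^*f'_*\one)\xrightarrow{\Psi(\phi)}\Psi(j^*f_*\one)\xleftarrow{\sigma}i^*f_*\one\simeq p_*\one_X,
\]
which is an equivalence in $\CAlg(\SH(k))$. The subscript $\gamma$ records its dependence on the H-data, including the choice of coordinate. For the dual statement, note that $X$ is smooth and proper, so $p_*\one_X$ is strongly dualizable with dual $p_\sharp\one_X=\Sigma^\infty_T X_+$. Dualization is a symmetric monoidal anti-equivalence on dualizable objects, and it carries the $E_\infty$-algebra $p_*\one_X$ to the cocommutative coalgebra whose comultiplication is induced by the diagonal and whose counit is induced by $X\to\Spec k$. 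Taking $h_\gamma=(a_\gamma)^\vee$ then gives the coalgebra equivalence.

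The main obstacle is the middle step. We need Ayoub's nearby cycle formalism, which is developed for schemes, to apply to smooth proper algebraic spaces. We also need the comparison $\sigma$ to be an equivalence and to be coherently multiplicative as a map of $E_\infty$-algebras, not only of homotopy ring objects. I would first try to avoid $\Psi$ on spaces entirely. The idea is to apply $\Psi$ only to $j^*E\in\SH(U)$, which lives on schemes, and to prove $i^*E\to\Psi j^*E$ is an equivalence by checking it after $\Hom(-,\text{compact generators})$. Dualizability of $E$ reduces this to the statement $\Psi(\one_U)\simeq\one_c$ tensored with $E$, using the projection formula for $\Psi$ against dualizable objects that are restricted from $C$.
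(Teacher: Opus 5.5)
Your proposal is correct and follows the paper's route. Dualizability of $f_*\one_{\mathcal X}$ in $\SH(C)$ lets you apply the unipotent nearby-cycle functor only on the scheme $U$, so the specialization map $i^*E\to\Upsilon_t j^*E$ is an equivalence in $\CAlg(\SH(k))$, and composing with proper base change, $\Upsilon_t$ of the algebra equivalence induced by $\gamma$, and smooth-proper duality gives $a_\gamma$ and $h_\gamma$. The paper instead cites Jin--Yang (dualizable objects are universally locally acyclic, so specialization is an equivalence) and builds the $\CAlg$-refinement explicitly as a geometric realization in $\CAlg(\SH(k))$, whereas you derive the key equivalence directly from the projection formula $\Upsilon_t(j^*E\otimes F)\simeq i^*E\otimes\Upsilon_t(F)$ for dualizable $E$ together with $\Upsilon_t(\one_U)\simeq\one_k$; your first option, extending proper base change for $\Psi$ to algebraic spaces, is exactly what the paper avoids and is not needed.
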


\subsection*{Summary of the proof}

For a smooth proper family over a pointed curve $q\colon Z\to (C,0)$, put
\[
A=q_*\one_Z\in\CAlg(\SH(C)).
\]
The six-functor formalism for algebraic spaces in \cite{Cho24} makes the underlying object of $A$ dualizable.

The isomorphism over the punctured curve $U$ in an H-relation gives an equivalence between commutative algebras over $U$. The unipotent nearby cycle functor, in the sense of \cite{Ayo07b} as used in \cite{JY22}, specializes this algebra equivalence to the special fibers; duality then turns the algebra comparison into the desired equivalence of counital cocommutative coalgebras. Sections~\ref{sec:duality}--\ref{sec:specialization} carry out the cohomological consequences of an H-relation, and Section~\ref{sec:riess} verifies that Rie{\ss}'s construction gives an H-relation and proves Theorem~\ref{thm:HK-intro}.

\subsection*{Acknowledgements}
I would like to thank my advisor, Yuchen Liu, for his constant support and encouragement, and for many inspiring conversations. I am also grateful to Benjamin Antieau, Dinglong Wang, Jakub Witaszek, Zheng Xu, Qizheng Yin, Bogdan Zavyalov, and Zhi Zheng for helpful discussions and for answering my questions, and to Junliang Shen and Brian Shin for stimulating conversations following their inspiring talks. I also thank Lie Fu for reading a preliminary version of this manuscript and for his many helpful suggestions, particularly concerning the formulation of the main theorem.

Special thanks go to my undergraduate friends, with whom I spent many happy hours learning the six-functor formalism of D-modules and stable homotopy theory.

\subsection*{AI disclosure}

This work began in May 2026 with the author's observation that Rie\ss's construction could be combined with Ayoub's motivic nearby cycle functor and Chowdhury's six-functor formalism for algebraic spaces to establish the main result of this paper. A key obstacle was the lack of a suitable motivic nearby-cycle formalism for general algebraic spaces. In October 2026, after being given a conditional proof, GPT-6 Pro suggested using a result of Jin and Yang on dualizable objects to circumvent this obstacle and obtain an unconditional proof; see Proposition~4.1. AI assistance was also used to draft and check the manuscript and to locate references. All AI-assisted material was independently verified and revised by the author, who takes full responsibility for the contents of the paper.

\section{Preliminaries}\label{sec:preliminaries}

All base curves are smooth, connected, separated, and of finite type over the indicated field. 

If $B$ is a scheme, $\SH(B)$ denotes the stable motivic homotopy $\infty$-category over $B$, and $\Ho(\SH(B))$ denotes its homotopy category. All algebraic spaces in this paper are finite-dimensional, noetherian, quasi-compact, and quasi-separated. For such an algebraic space $Y$, write $\SHext(Y)$ for Chowdhury's extended stable $\infty$-category. These algebraic spaces belong to the Nisnevich-local-section class of \cite[Example~2.1.6(1) and Theorem~2.2.2]{Cho24}, so they lie in the scope of that extension; on schemes, $\SHext$ agrees with $\SH$. The extension supplies the six operations, proper base change, projection formula, and smooth purity used below \cite[Theorem~4.5.1]{Cho24}. 

For a symmetric monoidal $\infty$-category $\mathcal D$, write $\CAlg(\mathcal D)$ for its $\infty$-category of commutative algebra objects. The right adjoint of a symmetric monoidal functor has its canonical lax symmetric monoidal structure; we use the induced adjunction on commutative algebras \cite[Corollary~7.3.2.7]{Lur17}.

For $M,E\in\SH(k)$, the notation $\mapSH{k}{M}{E}$ means the ordinary mapping spectrum of $\SH(k)$. For a scheme or algebraic space $B$ in the standing class, write
\[
D_BM=\uHom_B(M,\one_B)
\]
for internal Hom into the unit, whenever the ambient category is clear. For a smooth $k$-scheme $X$, put
\[
M(X)=\Sigma_T^\infty X_+,\qquad T=\A^1/(\A^1\setminus\{0\}).
\]
The geometric diagonal and structural map equip $M(X)$ with its natural counital cocommutative coalgebra structure.
For $E\in\SH(k)$, we write
\[
E^{a,b}(X)=[M(X),\Sigma^{a,b}E]_{\Ho(\SH(k))},
\qquad
E^{*,*}(k)=E^{*,*}(\Spec(k)).
\]
For a smooth morphism $f$ in this scheme or algebraic-space setting, we write $f_\#$ for the left adjoint to $f^*$ supplied by \cite[Proposition~4.1.2]{Cho24}.

\begin{defn}[The H-relation]\label{defn:H-relation}
Let $k$ be a field, and let $X$ and $X'$ be smooth proper $k$-schemes. An H-relation from $X$ to $X'$ consists of the following data:
\begin{enumerate}
\item a curve $C$ over $k$;
\item a $k$-point $0\in C$, with $U=C\setminus\{0\}$ and inclusions $i\colon\{0\}\to C$ and $j\colon U\to C$;
\item morphisms
\[
q\colon Z\to C,\qquad q'\colon Z'\to C
\]
from algebraic spaces satisfying the standing conventions above, each smooth, proper, and of finite type over $C$;
\item identifications of the special fibers $Z_0$ and $Z'_0$ with $X$ and $X'$ as $k$-schemes;
\item a $C$-isomorphism
\[
\gamma\colon Z_U\xrightarrow{\simeq}Z'_U.
\]
\end{enumerate}
\end{defn}
The letter $H$ alludes both to the non-Hausdorff behavior of moduli spaces and to Huybrechts's deformation theorem for birational hyper-K\"ahler manifolds.

\begin{lem}[A coordinate at the marked point]\label{lem:etale-coordinate}
Let $k$ be a field, let $C$ be a smooth curve over $k$, and let $0\in C(k)$. After replacing $C$ by a Zariski neighborhood of $0$, there is a morphism
\[
t\colon C\to \A^1_k
\]
which is \'etale and whose scheme-theoretic zero fiber is
\[
C\times_{\A^1_k}\{0\}\simeq \Spec(k).
\]
\end{lem}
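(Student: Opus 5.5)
The plan is to build $t$ from a uniformizer at the rational point and then shrink $C$ so that $t$ is étale with zero fiber exactly the reduced point $0$. Since $C$ is smooth of dimension one over $k$ and $0\in C(k)$, the local ring $\mathcal O_{C,0}$ is a discrete valuation ring with residue field $k$. Choose a uniformizer $\pi\in\mathfrak m_{C,0}$. Because $C$ is a scheme, $\pi$ extends to a regular function on some affine open neighborhood $V$ of $0$; this defines a morphism $t\colon V\to\A^1_k$ with $t(0)=0$.

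Next I check étaleness at $0$. Both $V$ and $\A^1_k$ are smooth of relative dimension one over $k$. It therefore suffices to show that $t^*\colon \Omega_{\A^1_k}\otimes\mathcal O_V\to\Omega_{V/k}$ is surjective, hence an isomorphism, at $0$. The image of $dx$ is $d\pi$. Since $\kappa(0)=k$, the map $\mathfrak m_0/\mathfrak m_0^2\to\Omega_{V/k}\otimes\kappa(0)$, $f\mapsto df$, is an isomorphism. The class of $\pi$ generates $\mathfrak m_0/\mathfrak m_0^2$, so $d\pi$ generates the fiber of $\Omega_{V/k}$ at $0$. By Nakayama, $t$ is étale on an open neighborhood of $0$. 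This uses the standard Jacobian criterion for a morphism of smooth $k$-schemes.

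The remaining step, which is the main point, is the scheme-theoretic zero fiber. The closed subscheme $t^{-1}(0)=V(\pi)$ is cut out by $\pi$. Near $0$ it agrees with $\Spec(\mathcal O_{C,0}/\pi)=\Spec(k)$ because $\pi$ generates $\mathfrak m_{C,0}$. But $V(\pi)$ may contain finitely many other closed points; it is a finite set since $\pi$ is not identically zero on the connected one-dimensional $V$. I remove these points together with the non-étale locus, which is closed and does not contain $0$. After this, $V(\pi)$ is supported at $0$, and there its ideal is generated by $\pi=\mathfrak m_0$. Hence $C\times_{\A^1_k}\{0\}\simeq\Spec(k)$. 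The result is a Zariski neighborhood of $0$, still smooth, connected and separated, so the standing conventions are preserved.
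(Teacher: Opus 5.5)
Your proposal is correct and follows the paper's proof essentially step by step. Both arguments take a uniformizer on an affine neighborhood, deduce étaleness near $0$ from $d\pi\neq 0$ at the rational point, and then remove the non-étale locus and the other zeros of $\pi$; your remarks on $\mathfrak m_0/\mathfrak m_0^2\cong\Omega_{V/k}\otimes\kappa(0)$ and on the finiteness of $V(\pi)$ only spell out what the paper states briefly.
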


\begin{proof}
Choose an affine neighborhood $C_1$ of $0$ and a regular function $t$ whose germ is a uniformizer in $\mathcal O_{C,0}$. Since $C$ is smooth of dimension one at $0$, the differential $dt$ is nonzero at $0$, so $t\colon C_1\to\A^1_k$ is \'etale on an open neighborhood of $0$. The zero locus of $t$ is closed and contains $0$ with multiplicity one. Removing its other points and the non-\'etale locus gives a Zariski neighborhood $C\subset C_1$ on which $t$ is \'etale; the remaining zero fiber is reduced and has the single $k$-rational point $0$.
\end{proof}

\begin{setup}\label{setup:nearby}
For the rest of the paper, when H-data over $C$ are given, we shrink $C$ around $0$ and choose $t\colon C\to\A^1_k$ as in Lemma~\ref{lem:etale-coordinate}. We keep the notation
\[
U=C\setminus\{0\}=t^{-1}(\mathbb G_{m,k})
\]
and write
\[
\Upsilon_t\colon \SH(U)\to \SH(k)
\]
for Ayoub's unipotent motivic nearby-cycle functor associated with $t$ \cite[Section~4]{Ayo07c}, in the form recalled in \cite[Section~2.2.5, formula~(2.2.5.1)]{JY22}. We use its lax symmetric monoidal structure from \cite[Section~2.2.10]{JY22}.
\end{setup}

\section{Dualizability}\label{sec:duality}

For a smooth proper morphism between schemes $M\to C$, the proper pushforward of the unit in $\SH(M)$ is dualizable in $\SH(C)$. We recall the same argument for algebraic spaces.

\begin{prop}[Dualizability of the proper pushforward]\label{prop:dualizable-pushforward}
Let $C$ be a smooth connected curve over a field, and let
\[
q\colon Z\to C
\]
be a morphism from an algebraic space satisfying the standing conventions of Section~\ref{sec:preliminaries}. Assume that $q$ is smooth, proper, and of finite type. Put
\[
A=q_*\one_Z\in\SH(C),
\qquad
L=q^!\one_C\simeq \Th(T_q)\in\SHext(Z).
\]
Then $A$ is strongly dualizable in $\SH(C)$, and its dual is
\[
D_C A\simeq q_*L\simeq q_\#\one_Z.
\]
For every $E\in\SH(C)$, the canonical map
\[
D_C A\tensor E\longrightarrow \uHom(A,E)
\]
is an equivalence.
\end{prop}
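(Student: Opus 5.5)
The plan is to run the standard Atiyah-duality argument for smooth proper morphisms inside Chowdhury's six-functor formalism \cite[Theorem~4.5.1]{Cho24}. The key identifications are $q_*=q_!$ (properness of $q$) and $q^!\simeq \Th(T_q)\tensor q^*$ (smooth purity). Together these give
\[
q_\#\simeq q_!(\Th(T_q)\tensor -)= q_*(L\tensor-).
\]
In particular $q_\#\one_Z\simeq q_*L$, which is the second equivalence in the statement. The object $A=q_*\one_Z$ lies in $\SH(C)$ because $C$ is a scheme, where $\SHext$ agrees with $\SH$.

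Next I identify $D_CA$. By the adjunction $q_!\dashv q^!$ and $q_*=q_!$,
\[
D_C A=\uHom_C(q_!\one_Z,\one_C)\simeq q_*\uHom_Z(\one_Z,q^!\one_C)=q_*L.
\]
This uses the internal form of the adjunction, $\uHom(q_!M,N)\simeq q_*\uHom(M,q^!N)$, which follows from the projection formula for $q_!$ by Yoneda. The same computation with an arbitrary $E$ in place of $\one_C$ gives
\[
\uHom(A,E)\simeq q_*q^!E\simeq q_*(L\tensor q^*E).
\]
The last step uses purity in the form $q^!E\simeq L\tensor q^*E$. By the projection formula for $q_!=q_*$, this equals $q_*L\tensor E$. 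It remains to check that the composite of these equivalences is the canonical map $D_CA\tensor E\to\uHom(A,E)$.

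That compatibility is the step I expect to be the main obstacle. The pointwise equivalence has to be identified with the canonical map built from evaluation, not merely some abstract equivalence. I would first try to reduce to naturality: both sides are exact functors of $E$ commuting with the relevant operations, and the canonical map is natural in $E$. So it suffices to check that the chain of equivalences above is natural in $E$, and that at $E=\one_C$ it is the identity of $D_CA$. Each ingredient (the projection formula, the purity transformation $\Th(T_q)\tensor q^*\to q^!$, the $q_!\dashv q^!$ adjunction) is natural and compatible with the module structure over $\SH(C)$ in Chowdhury's formalism, so the composite is the $\SH(C)$-linear extension of its value at $\one_C$. A natural transformation $D_CA\tensor E\to \uHom(A,E)$ linear over $\SH(C)$ is determined by its value at $\one_C$, and the canonical map is of this form, so the two agree.

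Finally, strong dualizability follows formally. In a closed symmetric monoidal category, $A$ is strongly dualizable if and only if $D_CA\tensor E\to\uHom(A,E)$ is an equivalence for all $E$. Alternatively, the case $E=A$ produces a coevaluation $\one\to\uHom(A,A)\simeq D_CA\tensor A$ corresponding to the identity map. Either route finishes the proof.
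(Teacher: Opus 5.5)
Your proposal is correct and follows essentially the same route as the paper: properness $q_*\simeq q_!$, the internal $q_!\dashv q^!$ adjunction, smooth purity $q^!E\simeq\Th(T_q)\tensor q^*E$, and the projection formula give $\uHom_C(A,E)\simeq q_*L\tensor E$. Setting $E=\one_C$ gives $D_CA\simeq q_*L$, and purity plus properness give $q_\#\one_Z\simeq q_*L$. Your check that this chain agrees with the canonical comparison map, via $\SH(C)$-linearity in $E$ and evaluation at $E=\one_C$, spells out a point the paper only asserts with a citation to Hoyois.
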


\begin{proof}
We work in Chowdhury's extension for the algebraic space $Z$ and in $\SH(C)$ on the base. Properness gives $q_!\simeq q_*$ by \cite[Proposition~4.3.1]{Cho24}, smooth full-Thom purity gives $q^!\one_C\simeq \Th(T_q)$ by \cite[Proposition~4.4.1 and its proof, and Remark~4.4.2]{Cho24}, and the smooth adjunction and projection formula are supplied by \cite[Proposition~4.1.2 and Theorem~4.5.1]{Cho24}. For $E\in\SH(C)$ these inputs give the canonical internal-Hom chain
\[
\begin{aligned}
\uHom_C(q_*\one_Z,E)
&\simeq \uHom_C(q_!\one_Z,E)\\
&\simeq q_*q^!E\\
&\simeq q_*\bigl(\Th(T_q)\tensor q^*E\bigr)\\
&\simeq q_*\Th(T_q)\tensor E.
\end{aligned}
\]
Taking $E=\one_C$ identifies $D_CA$ with $q_*\Th(T_q)=q_*L$. The same canonical chain for general $E$ identifies the strong-duality comparison $D_CA\tensor E\to\uHom(A,E)$, so $A$ is strongly dualizable, as in the formal argument of \cite[Section~3, equations (3-1)--(3-4), p.~3616]{Hoy14}. Finally, purity identifies $q_\#\one_Z$ with $q_!q^!\one_C$, and properness identifies this object with $q_*L$.
\end{proof}

\section{Specialization}\label{sec:specialization}

Ayoub's nearby-cycle formalism supplies the specialization map and the tensor compatibilities used below. We work in the $\infty$-categories of commutative algebras.

Let
\[
\chi_t=i^*j_*\colon \SH(U)\to\SH(k).
\]
There is a canonical comparison $\rho_t\colon\chi_t\to\Upsilon_t$. For $M\in\SH(C)$, define
\[
\operatorname{sp}^t_M\colon i^*M\longrightarrow\Upsilon_tj^*M
\]
as the composite
\[
i^*M\longrightarrow i^*j_*j^*M=\chi_t(j^*M)
\xrightarrow{\rho_t(j^*M)}
\Upsilon_t(j^*M),
\]
where the first map is $i^*$ applied to the unit of $j^*\dashv j_*$.

\begin{prop}[Specialization for commutative algebras]\label{prop:specialization-products}
Let $C$, $0$, $U$, $i$, $j$, and $t$ be as in Setup~\ref{setup:nearby}. The functor $\Upsilon_t$ admits a refinement
\[
\Upsilon_t\colon\CAlg(\SH(U))\longrightarrow\CAlg(\SH(k)).
\]
For $A\in\CAlg(\SH(C))$, there is a canonical specialization map
\[
\operatorname{sp}^t_A\colon i^*A\longrightarrow\Upsilon_tj^*A
\]
in $\CAlg(\SH(k))$, naturally in $A$. If the underlying object of $A$ is dualizable in $\SH(C)$, then this map is an equivalence.
\end{prop}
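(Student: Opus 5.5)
The proof has three parts. First we refine $\Upsilon_t$ to commutative algebras. Then we build $\operatorname{sp}^t$ as a composite of lax symmetric monoidal natural transformations. Finally we prove it is an equivalence on dualizable objects.

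\textbf{Refinement of $\Upsilon_t$ and of the specialization map.} By Setup~\ref{setup:nearby}, $\Upsilon_t$ carries a lax symmetric monoidal structure \cite[Section~2.2.10]{JY22}. A lax symmetric monoidal functor induces a functor on $\CAlg$, which gives the first claim.

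Next, $j^*$ and $i^*$ are symmetric monoidal. Hence $j_*$ is lax symmetric monoidal, and the unit $\mathrm{id}\to j_*j^*$ is a symmetric monoidal transformation of lax functors \cite[Corollary~7.3.2.7]{Lur17}. So $i^*\to i^*j_*j^*=\chi_t j^*$ is a lax symmetric monoidal transformation.

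It remains to see that $\rho_t\colon\chi_t\to\Upsilon_t$ is lax symmetric monoidal. I would read this off Ayoub's construction as recalled in \cite[Section~2.2.5]{JY22}. There $\Upsilon_t$ is $i^*j_*$ applied after tensoring with a commutative algebra object, the unipotent (logarithmic) algebra $\mathcal{L}og$ pulled back along $t$, modulo a colimit/limit formula. The map $\rho_t$ is induced by the unit $\one\to\mathcal{L}og$, which is an algebra map, so it is compatible with the lax structure. This should be recorded in \cite[Section~2.2.10]{JY22}.

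Composing, we get a lax symmetric monoidal transformation $i^*\Rightarrow\Upsilon_t j^*$ of functors $\SH(C)\to\SH(k)$. It therefore induces a natural map $\operatorname{sp}^t_A$ in $\CAlg(\SH(k))$ whose underlying map is the one defined above.

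\textbf{Equivalence for dualizable $A$.} Equivalences in $\CAlg$ are detected on underlying objects, so it suffices to show that $\operatorname{sp}^t_M$ is an equivalence in $\SH(k)$ for every dualizable $M\in\SH(C)$. For $M=\one_C$ this is the standard fact that $\operatorname{sp}^t_{\one}\colon\one_k\to\Upsilon_t\one_U$ is an equivalence for the unipotent nearby cycles of a smooth (here étale) coordinate. I expect to cite this from \cite{Ayo07c} or \cite{JY22}.

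For general dualizable $M$, I would invoke the result of Jin--Yang that nearby cycles commute with tensoring by dualizable objects. The key step is that for dualizable $M$ and any $N\in\SH(U)$, the lax comparison
\[
i^*M\tensor\Upsilon_t(N)\longrightarrow\Upsilon_t(j^*M\tensor N)
\]
is an equivalence. This is formal, because tensoring with a dualizable object commutes with all the functors $i^*$, $j_*$ (by the projection formula for dualizables) and with the limits/colimits defining $\Upsilon_t$.

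Taking $N=\one_U$ and composing with $\operatorname{sp}^t_{\one}\tensor i^*M$, the map $\operatorname{sp}^t_M$ factors as
\[
i^*M\simeq i^*M\tensor\one_k\xrightarrow{\simeq}i^*M\tensor\Upsilon_t\one_U\xrightarrow{\simeq}\Upsilon_t j^*M.
\]
To identify this composite with $\operatorname{sp}^t_M$ I would use the compatibility of the lax structure with the specialization transformation. This is routine once both maps come from the same lax monoidal transformation.

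\textbf{Main obstacle.} The main difficulty is locating precise references for two facts: that $\rho_t$ is monoidal, and that $\operatorname{sp}^t_{\one}$ is an equivalence in the integral $\SH$ setting (not just rationally or for $\mathrm{DA}$). If the unit case is not available directly, I would prove it by reducing along the étale map $t$ to the case $C=\A^1$ with the standard coordinate. There one computes $\Upsilon_{\mathrm{id}}\one$ via Ayoub's explicit description as $\one_k$, using $\A^1$-invariance. Étale base change of $\Upsilon$, whose fiber over $0$ is an isomorphism by Lemma~\ref{lem:etale-coordinate}, then transports the result back to $C$.
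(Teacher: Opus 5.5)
Your proposal is correct, but it is organized differently from the paper's proof.

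\textbf{Refinement and the specialization map.} The paper does not treat the monoidality of $\Upsilon_t$ or of $\rho_t$ as a black box. It writes $\Upsilon_t(E)$, via \cite[formula~(2.2.5.1)]{JY22}, as the geometric realization of the simplicial diagram $i^*j_*(b_n)_*b_n^*E$. For commutative $E$ this is a diagram in $\CAlg(\SH(k))$, whose realization is computed on underlying objects. The paper then obtains $\operatorname{sp}^t_A$ by realizing the compatible units $A\to(jb_n)_*(jb_n)^*A$. In that form the multiplicativity of $\rho_t$ is automatic, since it is the inclusion of the $0$-simplices, so nothing has to be located in \cite[Section~2.2.10]{JY22}. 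Your $\mathcal{L}og$ description paraphrases the same mechanism, which in (2.2.5.1) appears as $\uHom_U(t_U^*\Sigma_T^\infty B^n_+,-)\simeq(b_n)_*b_n^*$. Working with that form directly removes your reliance on an unlocated reference for $\rho_t$.

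\textbf{The equivalence.} The paper cites Jin--Yang directly. Since $t$ is smooth, dualizable objects on $C$ are universally strongly locally acyclic over $\A^1_k$, and \cite[Proposition~3.1.7(1)]{JY22} makes the specialization map invertible. You instead prove the formal projection formula $i^*M\tensor\Upsilon_tN\simeq\Upsilon_t(j^*M\tensor N)$ for dualizable $M$, by pushing the tensor through $i^*$, each $(jb_n)_*$, and the colimit. You then reduce to $M=\one_C$. Identifying the resulting composite with $\operatorname{sp}^t_M$ is indeed a triangle-identity check on the projection maps.

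\textbf{What each route buys.} Your reduction isolates the only geometric input: the invertibility of $\one_k\to\Upsilon_t\one_U$ for the \'etale coordinate. That is itself the case $M=\one_C$ of the same Jin--Yang proposition, or Ayoub's computation for $\mathrm{id}_{\A^1}$ combined with smooth base change of $\Upsilon$. So your route explains why dualizability is the right hypothesis, while the paper's route is shorter.

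\textbf{Caveat on your fallback.} The equivalence $\Upsilon_{\mathrm{id}}\one_{\mathbb G_m}\simeq\one_k$ does not follow from $\A^1$-invariance alone. It uses the splitting $i^*j_*\one_{\mathbb G_m}\simeq\one_k\oplus\one_k(-1)[-1]$ and the fact that the logarithmic tower kills the second summand. It is better cited than re-derived.
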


\begin{proof}
Let $B^\bullet$ be the cosimplicial smooth $\mathbb G_{m,k}$-scheme of \cite[Section~2.2.3]{JY22}, and write
\[
b_n\colon B^n\times_{\mathbb G_{m,k}}U\longrightarrow U,
\qquad t_U=t|_U.
\]
For $E\in\SH(U)$, smooth adjunction and the projection formula give
\[
\uHom_U(t_U^*\Sigma_T^\infty B^n_+,E)
\simeq (b_n)_*b_n^*E.
\]
Here the suspension spectrum is taken over $\mathbb G_{m,k}$. If $E$ is a commutative algebra, the objects $i^*j_*(b_n)_*b_n^*E$ form a simplicial diagram in $\CAlg(\SH(k))$: pullbacks are symmetric monoidal, their right adjoints are lax symmetric monoidal, and the transition maps are induced by the adjunction units. Its geometric realization
\[
\colim_{[n]\in\Delta^{\op}}^{\CAlg(\SH(k))}
 i^*j_*(b_n)_*b_n^*E
\]
has underlying object $\Upsilon_t(E)$ by \cite[formula~(2.2.5.1)]{JY22} and \cite[Corollary~3.2.3.2]{Lur17}. This gives the stated refinement of $\Upsilon_t$.

For $A\in\CAlg(\SH(C))$, the adjunction units
\[
A\longrightarrow (jb_n)_*(jb_n)^*A
\]
are compatible morphisms of commutative algebras. Pulling back by $i^*$ and passing to the geometric realization therefore gives
\[
\operatorname{sp}^t_A\colon i^*A\longrightarrow\Upsilon_tj^*A
\]
in $\CAlg(\SH(k))$, naturally in $A$. Its underlying map is the canonical specialization map defined above.

The morphism $t$ is \'etale, hence smooth. Every dualizable object of $\SH(C)$ is universally strongly locally acyclic over $\A^1_k$ in the sense of \cite[Definition~3.1.2 and Section~3.1.3, first bullet]{JY22}. Therefore \cite[Proposition~3.1.7(1)]{JY22} identifies the underlying specialization map as an equivalence. Since equivalences of commutative algebras are detected on their underlying objects \cite[Lemma~3.2.2.6]{Lur17}, the result follows.
\end{proof}

The H-relation between two schemes $X$ and $X'$ now gives a comparison of their motives as commutative algebras.

\begin{prop}[Algebra comparison on special fibers]\label{prop:algebra-comparison}
Let $k$ be a field, and let
\[
(q\colon Z\to C,\ q'\colon Z'\to C,\ \gamma\colon Z_U\xrightarrow{\simeq}Z'_U)
\]
be an H-relation from $X$ to $X'$ as in Definition~\ref{defn:H-relation}. Let
\[
p\colon X\to \Spec(k),\qquad p'\colon X'\to\Spec(k)
\]
be the structure maps, and put
\[
B_X=p_*\one_X,\qquad B_{X'}=p'_*\one_{X'}.
\]
Equip $B_X$ and $B_{X'}$ with the commutative algebra structures supplied by the lax symmetric monoidal structures on $p_*$ and $p'_*$. Then there is an equivalence
\[
a_{\gamma}\colon B_{X'}\xrightarrow{\simeq}B_X
\]
in $\CAlg(\SH(k))$.
\end{prop}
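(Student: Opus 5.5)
Set $A=q_*\one_Z$ and $A'=q'_*\one_{Z'}$ in $\CAlg(\SH(C))$. The plan is to pass through the common object $\Upsilon_t j^*A\simeq\Upsilon_t j^*A'$, using the specialization equivalence of Proposition~\ref{prop:specialization-products} on each side. Both algebra structures come from the lax symmetric monoidal structures on $q_*$ and $q'_*$, that is, from the right adjoints of the symmetric monoidal pullbacks in Chowdhury's extension, via \cite[Corollary~7.3.2.7]{Lur17}. By Proposition~\ref{prop:dualizable-pushforward}, the underlying objects of $A$ and $A'$ are strongly dualizable in $\SH(C)$.

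\textbf{Step 1.} Identify $i^*A\simeq B_X$ and $i^*A'\simeq B_{X'}$ as commutative algebras. Proper base change \cite[Theorem~4.5.1]{Cho24} for the cartesian square with $Z_0\simeq X$ gives an exchange map $i^*q_*\to p_*i_Z^*$ that is an equivalence. This map is a morphism of lax symmetric monoidal functors, because it is the mate of the symmetric monoidal identification $q^*i^*\simeq... $, more precisely of $i_Z^*q^*\simeq p^*i^*$. Evaluating at $\one_Z$ and using $i_Z^*\one_Z\simeq\one_X$ gives an equivalence $i^*A\simeq B_X$ in $\CAlg(\SH(k))$. The same argument works for $X'$.

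\textbf{Step 2.} Compare the restrictions to $U$. Smooth base change along the open immersion $j$ gives $j^*A\simeq (q_U)_*\one_{Z_U}$ as algebras, by the same mate argument. The isomorphism $\gamma$ over $U$ satisfies $q'_U\gamma=q_U$. It therefore gives an equivalence $(q'_U)_*\one_{Z'_U}\simeq (q'_U)_*\gamma_*\gamma^*\one_{Z'_U}\simeq (q_U)_*\one_{Z_U}$ in $\CAlg(\SH(U))$. Here we use that $\gamma^*\dashv\gamma_*$ is an adjoint equivalence of symmetric monoidal categories. The result is an equivalence $j^*A'\simeq j^*A$ of commutative algebras.

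\textbf{Step 3.} Apply $\Upsilon_t$, which refines to commutative algebras by Proposition~\ref{prop:specialization-products}. Then compose:
\[
B_{X'}\simeq i^*A'\xrightarrow{\operatorname{sp}^t_{A'}}\Upsilon_t j^*A'\simeq\Upsilon_t j^*A\xleftarrow{\operatorname{sp}^t_A} i^*A\simeq B_X .
\]
Both specialization maps are equivalences because $A$ and $A'$ are dualizable. Inverting $\operatorname{sp}^t_A$ defines $a_\gamma$.

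The main obstacle is Step 1, and to a lesser extent Step 2. One must make sure that the base-change equivalences in Chowdhury's extended setting are compatible with the lax monoidal structures, so that they are equivalences in $\CAlg$ rather than only of underlying objects. I would handle this formally. A natural transformation of lax symmetric monoidal functors, obtained as the mate of a symmetric monoidal equivalence, is lax symmetric monoidal. It is then an equivalence in $\CAlg$ as soon as its underlying map is one, by \cite[Lemma~3.2.2.6]{Lur17}.
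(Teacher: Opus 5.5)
Your proposal is correct and follows the paper's proof essentially step for step. In both arguments, base change identifies $i^*A$ and $i^*A'$ with $B_X$ and $B_{X'}$, and $j^*A$ and $j^*A'$ with the pushforwards over $U$, as commutative algebras; $\gamma$ then gives $j^*A'\simeq j^*A$ in $\CAlg(\SH(U))$. Finally, the specialization equivalences of Proposition~\ref{prop:specialization-products}, which apply by the dualizability in Proposition~\ref{prop:dualizable-pushforward}, are composed with $\Upsilon_t$ applied to that equivalence.

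The differences are cosmetic. The paper checks that the base-change map is a map of algebras by taking the adjoint of $\iota^*\varepsilon_q$ under the adjunction on $\CAlg$; this is your mate argument evaluated at $\one_Z$. It also uses proper base change over $U$ rather than base change along the open immersion $j$.
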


\begin{proof}
Put $A=q_*\one_Z$ and $A'=q'_*\one_{Z'}$, with their commutative algebra structures supplied by the lax symmetric monoidal structures on $q_*$ and $q'_*$. Let
$\varepsilon_q\colon q^*A\to\one_Z$ be the adjunction counit.
The multiplication $\mu_A\colon A\tensor A\to A$ is adjoint to
\[
q^*(A\tensor A)
\simeq q^*A\tensor q^*A
\xrightarrow{\varepsilon_q\tensor\varepsilon_q}
\one_Z\tensor\one_Z
\simeq\one_Z,
\]
and the unit $\eta_A\colon\one_C\to A$ is adjoint to
$q^*\one_C\simeq\one_Z$. The same description applies to $A'$ and to the specified algebra structures on $B_X$ and $B_{X'}$.

Proper base change for algebraic spaces
\cite[Proposition~4.1.5]{Cho24} gives canonical equivalences
\[
\beta_X\colon i^*A\xrightarrow{\simeq}B_X,
\qquad
j^*A\xrightarrow{\simeq}(q_U)_*\one_{Z_U},
\]
and similarly for $A'$.
These are equivalences of commutative algebras.
Indeed, writing $\iota\colon X\to Z$ for the special-fiber inclusion,
the counit gives a morphism
\[
p^*i^*A\simeq\iota^*q^*A
\xrightarrow{\iota^*\varepsilon_q}\one_X
\]
in $\CAlg(\SH(X))$. Its adjoint under the induced adjunction on commutative algebras is $\beta_X$. The underlying map is the proper-base-change equivalence, so $\beta_X$ is an equivalence in $\CAlg(\SH(k))$. The same argument applies to $A'$ and to the restrictions over $U$.

The isomorphism $\gamma\colon Z_U\xrightarrow{\simeq}Z'_U$ therefore
induces an equivalence
\[
\alpha_\gamma\colon j^*A'\xrightarrow{\simeq}j^*A
\]
in $\CAlg(\SH(U))$, using the monoidal pullback along $\gamma$ and the induced adjunctions on commutative algebras.

By Proposition~\ref{prop:dualizable-pushforward}, the underlying objects of $A$ and $A'$ are dualizable. Proposition~\ref{prop:specialization-products} therefore gives equivalences
\[
s_A=\operatorname{sp}^t_A\colon i^*A\xrightarrow{\simeq}\Upsilon_tj^*A,
\qquad
s_{A'}=\operatorname{sp}^t_{A'}\colon i^*A'\xrightarrow{\simeq}\Upsilon_tj^*A'
\]
in $\CAlg(\SH(k))$. Applying the commutative-algebra refinement of $\Upsilon_t$ to $\alpha_\gamma$ and composing gives
\[
a_\gamma
=
\beta_X\circ s_A^{-1}\circ\Upsilon_t(\alpha_\gamma)
\circ s_{A'}\circ\beta_{X'}^{-1}
\colon B_{X'}\xrightarrow{\simeq}B_X
\]
in $\CAlg(\SH(k))$.
\end{proof}

\begin{proof}[Proof of Theorem~\ref{thm:H-invariance-intro}]
The algebra equivalence is Proposition~\ref{prop:algebra-comparison}. Let $M(X)=\Sigma_T^\infty X_+=p_\#\one_X$ and $M(X')=\Sigma_T^\infty X'_+=p'_\#\one_{X'}$. Hoyois's smooth-proper duality gives strong dualizability and the natural identification
\[
\uHom_k(p_\#\one_X,E)\simeq p_*p^*E
\]
by \cite[Corollary~6.13 and its proof, and Remark~6.21]{Hoy17}; taking $E=\one_k$ identifies $D_kM(X)$ with $p_*\one_X$, and similarly for $X'$. This identification is compatible with the lax symmetric monoidal structures supplied by the monoidal adjunctions, so the commutative algebra structure on $p_*\one_X$ is dual to the natural counital cocommutative coalgebra structure on $M(X)$. In particular, the adjunction multiplication is the dual of the geometric diagonal, and the adjunction unit $\one_k\to p_*\one_X$ is the dual of the augmentation of $M(X)$ given by $p\colon X\to\Spec(k)$. The same applies to $X'$.

Proposition~\ref{prop:algebra-comparison} gives an equivalence
\[
a_\gamma\colon D_kM(X')\xrightarrow{\simeq}D_kM(X)
\]
in $\CAlg(\SH(k))$. Duality on dualizable objects is contravariantly symmetric monoidal. Dualizing therefore gives an equivalence
\[
h_\gamma=D_k(a_\gamma)\colon M(X)\xrightarrow{\simeq}M(X')
\]
of counital cocommutative coalgebra objects in $\SH(k)$.
\end{proof}

\begin{cor}[Specialization from an isomorphic generic fiber]\label{cor:generic-fiber-specialization}
Let $k$ be a field, let $C$ be a smooth connected separated finite-type curve over $k$ with $0\in C(k)$, and put $K=k(C)$. Let
\[
q\colon Z\to C,\qquad q'\colon Z'\to C
\]
be morphisms from algebraic spaces satisfying the standing conventions of Section~\ref{sec:preliminaries}, each smooth, proper, and of finite type, whose special fibers $X=Z_0$ and $X'=Z'_0$ are smooth proper $k$-schemes. Suppose that a $K$-isomorphism
\[
\gamma_K\colon Z_K\xrightarrow{\simeq}Z'_K
\]
is specified. After replacing $C$ by a Zariski neighborhood of $0$, the conclusions of Theorem~\ref{thm:H-invariance-intro} hold for $X$ and $X'$.
\end{cor}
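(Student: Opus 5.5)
The plan is to spread the generic isomorphism $\gamma_K$ to an isomorphism over a punctured Zariski neighborhood of $0$. This produces an H-relation in the sense of Definition~\ref{defn:H-relation}, and Theorem~\ref{thm:H-invariance-intro} then applies directly. Write $C^\circ=C\setminus\{0\}$. Since $C$ is integral, $\Spec(K)$ is the cofiltered limit of the dense opens $V\subset C^\circ$, with affine transition maps. The algebraic spaces $Z_{C^\circ}$ and $Z'_{C^\circ}$ are of finite presentation over $C^\circ$, because they are of finite type and $C^\circ$ is noetherian with quasi-compact, quasi-separated total spaces.

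\textbf{Spreading out.} I would use the standard limit formalism for morphisms of algebraic spaces of finite presentation (the Stacks Project limit results for algebraic spaces). It gives
\[
\Hom_K(Z_K,Z'_K)\simeq\colim_V\Hom_V(Z_V,Z'_V).
\]
Hence $\gamma_K$ and $\gamma_K^{-1}$ descend to $V$-morphisms $\gamma_V$ and $\delta_V$ for some dense open $V\subset C^\circ$. The identities $\delta_K\circ\gamma_K=\mathrm{id}$ and $\gamma_K\circ\delta_K=\mathrm{id}$ hold after restriction to $K$. By injectivity in the same colimit, they already hold over a smaller dense open, which we rename $V$. So $\gamma_V\colon Z_V\xrightarrow{\simeq}Z'_V$ is a $V$-isomorphism.

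\textbf{Shrinking $C$.} Since $C$ is a curve, $C^\circ\setminus V$ is a finite set of closed points, none of which is $0$. Replace $C$ by the Zariski neighborhood $C_1=V\cup\{0\}$, which is open. Then shrink further as in Setup~\ref{setup:nearby}, so that Lemma~\ref{lem:etale-coordinate} provides an étale coordinate $t$ with $U=C_1\setminus\{0\}\subset V$. After this further shrinking we still have $U\subset V$, and we restrict $\gamma_V$ to $U$.

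\textbf{Verifying the H-relation.} The restrictions $Z_{C_1}\to C_1$ and $Z'_{C_1}\to C_1$ remain smooth, proper, and of finite type. Open subspaces of spaces satisfying the standing conventions (finite-dimensional, noetherian, quasi-compact, quasi-separated) still satisfy them. The special fibers are unchanged, so they are still identified with $X$ and $X'$. Together with $\gamma_U$, this is an H-relation from $X$ to $X'$, and Theorem~\ref{thm:H-invariance-intro} yields both conclusions.

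The main obstacle is the limit step. One must check that the relevant finite-presentation hypotheses hold for algebraic spaces, rather than schemes, so that the colimit description of $\Hom$ is valid. The rest is bookkeeping about which opens are removed.
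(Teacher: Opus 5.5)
Your proposal is correct and follows essentially the same route as the paper: you spread $\gamma_K$, its inverse, and the two composition identities to a dense open $V$ using the Stacks limit results for algebraic spaces, then replace $C$ by $V\cup\{0\}=C\setminus((C\setminus V)\setminus\{0\})$ to obtain an H-relation. The differences are cosmetic. The paper first shrinks $C$ to an affine neighborhood of $0$, which makes the affine-transition-map hypothesis immediate, whereas you need that removing closed points from a smooth curve is an affine open immersion. Your extra shrinking for the étale coordinate is not needed, since it is already handled inside Theorem~\ref{thm:H-invariance-intro} via Setup~\ref{setup:nearby}.
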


\begin{proof}
After replacing $C$ by an affine neighborhood of $0$, the finite-presentation limit theorem \cite[Lemma~70.7.1, Tag~07SK]{Stacks} spreads $\gamma_K$, its inverse, and the two composition identities to a common nonempty open subset $V\subset C$. Replace $C$ by
\[
C\setminus\bigl((C\setminus V)\setminus\{0\}\bigr),
\]
so the point $0$ is not removed and the punctured curve of the new base lies in $V$. The spread maps then restrict to an isomorphism $\gamma\colon Z_U\xrightarrow{\simeq}Z'_U$. The resulting data are an H-relation, so Theorem~\ref{thm:H-invariance-intro} applies.
\end{proof}

\section{Birational hyper-K\"ahler varieties}\label{sec:riess}

Rie{\ss}'s deformation families supply the H-data needed for Theorem~\ref{thm:H-invariance-intro} after the base curve is shrunk around the marked point.

\begin{prop}\label{prop:riess-H-data}
Let $X$ and $X'$ be birational projective irreducible holomorphic symplectic varieties over $\C$. Then, after replacing the curve in \cite[Proposition~2.1]{Rie14} by a Zariski neighborhood of the marked point, the two families in that proposition form an H-relation from $X$ to $X'$ in the sense of Definition~\ref{defn:H-relation}.
\end{prop}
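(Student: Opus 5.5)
We would argue by unpacking the output of \cite[Proposition~2.1]{Rie14} and checking the items of Definition~\ref{defn:H-relation} one at a time. Recall what that proposition provides, building on Huybrechts's work. Given birational projective irreducible holomorphic symplectic manifolds $X$ and $X'$, there is a smooth pointed curve $(C,0)$ over $\C$ (or a finite-type algebraic curve after algebraization). Over it there are two smooth proper families $\mathcal X\to C$ and $\mathcal X'\to C$ of algebraic spaces, with special fibers $X$ and $X'$, and an isomorphism $\mathcal X_{C\setminus\{0\}}\simeq\mathcal X'_{C\setminus\{0\}}$ over the punctured curve. The first step is to confirm that Riess's curve can be taken to be an algebraic curve over $\C$, i.e.\ smooth, connected, separated and of finite type, rather than an analytic disc. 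If her statement is analytic, we would use her algebraic construction: she takes a curve in a suitable moduli space or Hilbert scheme of polarized deformations and a family algebraic over it. Shrinking to a Zariski neighborhood of $0$ preserves smoothness, connectedness and separatedness. It also lets us assume $C$ is affine, which is harmless for Setup~\ref{setup:nearby}.

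Next we check item (3). The total spaces are algebraic spaces of finite type over $C$, hence noetherian, quasi-compact, quasi-separated and finite-dimensional, so they satisfy the standing conventions of Section~\ref{sec:preliminaries}. Smoothness and properness over $C$ come directly from Riess's proposition. If smoothness is guaranteed only near the special fibre, we would use openness of the smooth locus together with properness. The non-smooth locus is closed in the total space, so its image in $C$ is closed, and since it misses $0$ we can remove it by a Zariski shrinking. Item (4) is the identification of the fibres over $0$ with $X$ and $X'$; these fibres are projective, so they are schemes, as required.

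For item (5), Riess provides either an isomorphism over the punctured curve or an isomorphism of generic fibres. In the second case we would invoke exactly the spreading argument of Corollary~\ref{cor:generic-fiber-specialization}. First pass to an affine neighborhood. Then use \cite[Lemma~70.7.1, Tag~07SK]{Stacks} to spread $\gamma_K$, its inverse and the two composition identities over an open $V\subset C$. Finally remove the finitely many points of $(C\setminus V)\setminus\{0\}$.

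The main obstacle is the comparison with Riess's actual formulation. Her families arise from Huybrechts's deformation theorem, in which the isomorphism away from $0$ may a priori be given only on a dense open set, or only after an étale base change. We would try first to read off the isomorphism directly from her construction. There, away from $0$ the two families are related by a birational map that is an isomorphism on fibres carrying no effective divisors of the relevant kind, and this locus is the complement of a countable union of closed points. We would then use that an isomorphism over the generic point already suffices, and reduce to the spreading step above.
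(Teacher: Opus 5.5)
\textbf{Gap in item (3).} You write that smoothness and properness of both families over $C$ ``come directly from Riess's proposition''. Establishing these properties is in fact the content of the statement. As the paper reads it, the construction in \cite[Section~2 and Proposition~2.1]{Rie14} provides only integral, separated, finite-type algebraic spaces $\mathscr X\to C$ and $\mathscr X'\to C$, with special fibres $X$ and $X'$ and an isomorphism over $U$. The first of these is projective, being pulled back from a family over a Hilbert scheme. The paper says explicitly that the construction must be slightly modified to obtain smooth proper families.

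The punctured isomorphism, which you single out as the main obstacle, is taken directly from Rie{\ss} and is not where the difficulty lies.

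Your fallback for smoothness has two problems:
\begin{itemize}
\item It presupposes smoothness along the special fibre without saying where this comes from. One first needs flatness: after shrinking, an integral space dominating the regular curve $C$ is flat by the torsion-free criterion over its local rings. Only then does the fibrewise criterion give smoothness along $X$ and $X'$.
\item It uses properness to make the image of the non-smooth locus closed. That is fine for the projective family $\mathscr X$, and it is what the paper does there. For $\mathscr X'$ it is circular. The paper instead gets smooth fibres of $\mathscr X'$ over $U$ from the punctured isomorphism with $\mathscr X$.
\end{itemize}

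\textbf{Missing idea: properness of $\mathscr X'\to C$.} This does not follow from properness of the fibres. Take $C=\A^1\setminus\{-1\}$ and the integral \'etale double cover $\{y^2=t+1\}\to C$, and remove the point $(t,y)=(0,-1)$. The result is smooth and separated, proper over $U$, and has a one-point special fibre, yet it is not proper over $C$.

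The paper rules this out by a connectedness argument:
\begin{enumerate}
\item Choose a Nagata compactification $j\colon\mathscr X'\hookrightarrow Y$ with $Y$ integral and proper over $C$.
\item Over $U$, $j_U$ is a dense open immersion whose source $\mathscr X'_U\simeq\mathscr X_U$ is proper. Hence $j_U$ is proper, so closed, so an isomorphism.
\item The generic fibre $Y_K\simeq\mathscr X_K$ is proper, smooth and geometrically connected. This holds because the number of geometric connected components is locally constant in the smooth proper family $\mathscr X$, and $X$ is connected.
\item Therefore $H^0(Y_K,\mathcal O_{Y_K})=K$, and Zariski connectedness \cite[Tag~0AYI]{Stacks} makes $Y_0$ connected.
\item The open immersion $X'=\mathscr X'_0\hookrightarrow Y_0$ is also proper, since $X'$ is proper. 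Hence it is clopen, and so surjective.
\item Thus $j$ is a surjective open immersion, i.e.\ an isomorphism.
\end{enumerate}
In the double-cover example it is exactly the geometric connectedness of the generic fibre, step 3, that fails. This is the ingredient your proposal needs and does not have.
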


\begin{proof}

The construction in \cite[Section~2 and Proposition~2.1]{Rie14} gives a smooth pointed curve $(C,0)$ and integral separated finite-type algebraic spaces
\[
\mathscr X\to C,\qquad \mathscr X'\to C,
\]
with special fibers $X$ and $X'$ and an isomorphism over $U=C\setminus\{0\}$. 
Here we need to slightly modify Rie{\ss}'s construction to get two smooth proper families. The properness is used when Rie{\ss} consider \textit{proper} pushforward of cycles and smoothness is used when considering relative Chern classes.

In the proof of \cite[Proposition~2.1]{Rie14}, the first family is a pullback from a projective family over a Hilbert scheme, and is therefore projective.

After shrinking $C$ around $0$, the two dominating integral families are flat over $C$: \'etale-locally this is the torsion-free criterion over the local DVRs of the regular curve $C$ \cite[Tag~0539]{Stacks}. Since the special fiber of the first projective family is the smooth variety $X$, the fiberwise criterion gives smoothness along $X$ \cite[Tag~01V9]{Stacks}. Its non-smooth locus is closed, and its image in $C$ is closed by properness of the first family, so removing that image away from $0$ makes the first family smooth. The punctured isomorphism then gives smooth fibers for the second family over $U$, while its special fiber is the smooth variety $X'$; applying the same fiberwise criterion \'etale-locally makes the second family smooth. The first family is a scheme because it is projective over the scheme $C$, and the number of geometric connected components in this proper smooth family is locally constant \cite[Tag~0E0N]{Stacks}. Since $X$ is geometrically connected, its generic fiber is geometrically connected. 

And it remains only to prove properness of $\mathscr X'\to C$. By Nagata compactification for algebraic spaces \cite[Theorem~1.2.1]{CLO12}, choose an open immersion
\[
j\colon \mathscr X'\hookrightarrow Y
\]
over $C$, with $Y$ integral and proper over $C$; here $Y$ is the reduced closure in a compactification. The whole punctured family $\mathscr X'_U$ is proper over $U$, since it is isomorphic to the proper family $\mathscr X_U$. Thus the dense open immersion $j_U\colon\mathscr X'_U\hookrightarrow Y_U$ is proper by \cite[Tag~04NX]{Stacks}, hence an isomorphism.

Let $K=\C(C)$. Then
\[
Y_K\simeq \mathscr X'_K\simeq \mathscr X_K,
\qquad
H^0(Y_K,\mathcal O_{Y_K})=K,
\]
because the generic fiber of $\mathscr X\to C$ is proper, smooth, and geometrically connected. Since $C$ is normal and integral, $Y$ is integral, and the displayed $H^0$ condition holds, \cite[Tag~0AYI]{Stacks} gives connected geometric fibers for $Y\to C$. In particular, $Y_0$ is connected.

The open immersion
\[
X'=\mathscr X'_0\hookrightarrow Y_0
\]
is proper because $X'$ is proper over $\C$ and $Y_0$ is separated over $\C$ \cite[Tag~04NX]{Stacks}. Its image is nonempty, open, and closed, hence all of $Y_0$. Together with $j_U$ being an isomorphism, this makes $j$ a surjective open immersion, hence an isomorphism. Therefore $\mathscr X'\to C$ is proper, and the two families form an H-relation.
\end{proof}

\begin{proof}[Proof of Theorem~\ref{thm:HK-intro}]
Put $k=\C$. Proposition~\ref{prop:riess-H-data} gives an H-relation from $X$ to $X'$. Theorem~\ref{thm:H-invariance-intro} gives an equivalence
\[
a_\gamma\colon p'_*\one_{X'}\xrightarrow{\simeq}p_*\one_X
\]
in $\CAlg(\SH(k))$, and its dual
\[
h_\gamma\colon \Sigma_T^\infty X_+\xrightarrow{\simeq}\Sigma_T^\infty X'_+
\]
is an equivalence of counital cocommutative coalgebra objects in $\SH(k)$.

For any $E\in\SH(k)$, precomposition with $h_\gamma$ gives an equivalence of mapping spectra
\[
\mapSH{k}{M(X')}{E}\xrightarrow{\simeq}\mapSH{k}{M(X)}{E}.
\]
If $E$ is a homotopy commutative unital ring object of $\Ho(\SH(k))$, the cup product on represented cohomology is defined using the multiplication of $E$ and the geometric diagonal of the source, and the unit and coefficient action are defined using the augmentation to $\one_k$. Since $h_\gamma$ preserves the diagonal and the augmentation, precomposition with $h_\gamma$ carries cup products, units, and the $E^{*,*}(k)$-algebra structure for $X'$ to those for $X$. Thus it induces an isomorphism of bigraded $E^{*,*}(k)$-algebras
\[
E^{*,*}(X')\xrightarrow{\simeq}E^{*,*}(X).
\]

For $E\in\CAlg(\SH(k))$, the same comparison is an equivalence of $E_\infty$-algebras over $E(k)=\mapSH{k}{\one_k}{E}$. Indeed, smooth proper duality and the projection formula identify $\uHom_k(M(X),E)$ with $B_X\tensor E$ as a commutative $E$-algebra, and similarly for $X'$. Applying the lax symmetric monoidal functor $\mapSH{k}{\one_k}{-}$ to the extension of $a_\gamma$ along $\one_k\to E$ gives the claim.

For $E=\KGL$, spectrum-level representability of homotopy invariant K-theory \cite[Sections~2.1 and~2.16, Proposition~2.19, Theorem~2.20]{Cis13} gives, for $Y=X$ or $X'$, natural equivalences
\[
\mapSH{k}{M(Y)}{\KGL}\simeq\KH(Y)\simeq \K(Y).
\]
The last equivalence holds because $Y$ is smooth, hence regular, so homotopy invariant K-theory agrees with nonconnective algebraic K-theory. These identifications can be made as equivalences of presheaves of $E_\infty$-rings on smooth $k$-schemes by \cite[Theorems~1.1 and~6.1]{NSO15}; their naturality identifies the maps from $\K(k)$. Applying the mapping-spectrum equivalence to $E=\KGL$ gives an equivalence
\[
\K(X')\xrightarrow{\simeq}\K(X)
\]
of $E_\infty$-algebras over $\K(k)$. For $n\in\Z$, we use the degree convention
\[
\K_n(Y)=\KGL^{-n,0}(Y),
\]
justified for the regular schemes $Y=X,X'$ by the preceding spectrum representation and the equivalence $\KH(Y)\simeq \K(Y)$. The multiplication agrees with the ordinary K-theory product by \cite[Corollary~3.9 in arXiv:0709.3905v2]{PPR09}, so the coefficient-algebra statement gives an isomorphism of graded K-theory rings.
\end{proof}

\urlstyle{same}

\end{document}